\theoremstyle{plain}
\newtheorem{thm}{Theorem}[section]
\newtheorem{theorem}[thm]{Theorem}
\newtheorem{proposition}[thm]{Proposition}
\newtheorem{conjecture}[thm]{Conjecture}
\theoremstyle{definition}
\newtheorem{definition}[thm]{Definition}
\newtheorem{remark}[thm]{Remark}
\newtheorem{example}[thm]{Example}
\newtheorem{thevarthm}[thm]{\varthmname}
\newenvironment{varthm*}[1]{\trivlist\item[]{\bf #1.}\it}{\endtrivlist}
\def\keywordname{{\bfseries Keywords}}%
\def\keywords#1{\par\addvspace\medskipamount{\rightskip=0pt plus1cm
\def\and{\ifhmode\unskip\nobreak\fi\ $\cdot$
}\noindent\keywordname\enspace\ignorespaces#1\par}}
\def\subclassname{{\bfseries Mathematics Subject Classification
(2020)}\enspace}
\def\subclass#1{\par\addvspace\medskipamount{\rightskip=0pt plus1cm
\def\and{\ifhmode\unskip\nobreak\fi\ $\cdot$
}\noindent\subclassname\ignorespaces#1\par}}
\begin{document}
\title{On Poincar\'e polynomials for plane curves with quasi-homogeneous singularities}
\author{Piotr Pokora}
\date{\today}
\maketitle

\thispagestyle{empty}
\begin{abstract}
We define a combinatorial object that can be associated with any conic-line arrangement with ordinary singularities, which we call the combinatorial Poincar\'e polynomial. We prove a Terao-type factorization statement on the splitting of such a polynomial over the rationals under the assumption that our conic-line arrangements are free and admit ordinary quasi-homogeneous singularities. Then we focus on the so-called $d$-arrangements in the plane. In particular, we provide a combinatorial constraint for free $d$-arrangements admitting ordinary quasi-homogeneous singularities.
\keywords{conic-line arrangements, freeness, singularities of plane curves, Poincar\'e polynomials}
\subclass{14H50, 14N20, 32S25}
\end{abstract}
\section{Introduction}
In the present paper we continue our studies on the freeness of plane curve arrangements with ordinary quasi-homogeneous singularities, which we started in \cite{PP2023, PP2024}. Let us briefly recall that a reduced plane curve is said to be {\it free} if its associated module of derivatives is a free module over the polynomial ring. Our investigations in this paper are divided into two parts. In the first part, we focus on conic-line arrangements with ordinary quasi-homogeneous singularities. Our motivation to study such arrangements follows from the so-called Numerical Terao's Conjecture, which tells us that the freeness of such conic-line arrangements is governed by the vector of weak combinatorics. More precisely, if $C \subset \mathbb{P}^{2}_{\mathbb{C}}$ is a reduced plane curve such that all irreducible components are smooth and $C$ has only ordinary singularities (i.e., they look locally like $x^{r} = y^{r}$ for some $r\geq 2$), then the weak-combinatorics is defined as the vector of the form
\[W(C) = \{k_{1}, \ldots , k_{l}; n_{2}, \ldots , n_{t}\},\]
where $k_{i}$ denotes the number of irreducible components of degree $i$ and $n_{j}$ denotes the number of $j$-fold intersections in $C$.
\begin{conjecture}[Numerical Terao's Conjecture -- special case]
Let $C_{1}, C_{2}$ be two reduced curves in $\mathbb{P}^{2}_{\mathbb{C}}$ such that their all irreducible components are smooth and the curves admit only ordinary quasi-homogeneous singularities. Assume that $C_{1}$ and $C_{2}$ have the same weak combinatorics, i.e., $W(C_{1}) = W(C_{2})$, and $C_{1}$ is free. Then $C_{2}$ has to be free.
\end{conjecture}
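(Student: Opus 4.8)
The plan is to reduce the conjecture to a single numerical criterion for freeness and then to isolate precisely the one non-combinatorial quantity that obstructs the reduction. First I would recall the characterization of freeness of a reduced plane curve $C=\{f=0\}\subset\mathbb{P}^2_{\mathbb{C}}$ of degree $d$ in terms of the total Tjurina number and the minimal degree of a Jacobian relation $r=\operatorname{mdr}(f)$: by the du~Plessis--Wall bounds, refined by Dimca, when $r\le (d-1)/2$ one has $\tau(C)\le (d-1)^2-r(d-1-r)$, with equality if and only if $C$ is free with exponents $(r,d-1-r)$. Since every singularity in our class is ordinary and quasi-homogeneous, its local Tjurina number equals its local Milnor number, and for an ordinary $m$-fold point this is $(m-1)^2$. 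Hence the weak combinatorics determines both invariants we need:
\[
d=\sum_{i} i\,k_i, \qquad \tau(C)=\sum_{m\ge 2} n_m\,(m-1)^2,
\]
so in particular $d$ and $\tau$ agree for $C_1$ and $C_2$.

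Granting this, the argument would run as follows. Since $C_1$ is free, writing $r_1=\operatorname{mdr}(f_1)\le (d-1)/2$ gives $\tau(C_2)=\tau(C_1)=(d-1)^2-r_1(d-1-r_1)$. Set $r_2=\operatorname{mdr}(f_2)$. Applying the du~Plessis--Wall upper bound to $C_2$---and using the complementary estimates to exclude the large-$\operatorname{mdr}$ regime $r_2\ge d/2$, which caps $\tau$ below the value forced here---yields $r_2(d-1-r_2)\le r_1(d-1-r_1)$; since $t\mapsto t(d-1-t)$ is strictly increasing on $[0,(d-1)/2]$ and both $r_1,r_2$ lie in this interval, this forces $r_2\le r_1$. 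If one could then establish the reverse inequality $r_2\ge r_1$, hence $r_2=r_1$, the value $\tau(C_2)$ would coincide with its own extremal bound $(d-1)^2-r_2(d-1-r_2)$, and the characterization above would immediately give that $C_2$ is free. Thus the entire statement collapses to the claim that $\operatorname{mdr}(f)$ is itself combinatorial, i.e.\ determined by $W(C)$.

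This last claim is exactly where the genuine difficulty lies, and it is why the statement is recorded as a conjecture: the minimal degree of a Jacobian syzygy is \emph{not} known to be a combinatorial invariant, and matching $\tau$ to a free value does not by itself prevent the Jacobian module of $C_2$ from acquiring an unexpected low-degree relation that would lower $r_2$ strictly below $r_1$. This is the plane-curve incarnation of the obstruction keeping the classical Terao conjecture open even for line arrangements, where the analogous converse (factorization of the characteristic polynomial $\Rightarrow$ freeness) is known to fail in general. The realistic line of attack I would pursue is therefore not to prove $r_2=r_1$ abstractly but to exploit the combinatorial Poincar\'e polynomial and the Terao-type factorization established earlier in this paper: the factorization reads off, from $W(C)$ alone, the candidate pair of exponents, and one would then try to upgrade this necessary condition to a sufficient one inside the restricted world of conic-line and $d$-arrangements, bounding $r_2$ from below through the prescribed intersection pattern of the conics and lines. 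I expect the main obstacle to be precisely this converse---controlling $\operatorname{mdr}(f_2)$ from below using only combinatorial data---which the partial results of the paper chip away at rather than resolve in full.
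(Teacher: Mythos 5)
You were asked to prove a statement that the paper itself records as a \emph{conjecture}: there is no proof of it anywhere in the paper, and in fact the introduction explicitly notes, citing \cite{Mar}, that the statement as written is \emph{false} --- the triangular line arrangements of Marchesi and Vall\'es provide pairs with identical weak combinatorics, one free and one not, and they satisfy every hypothesis of the conjecture (lines are smooth irreducible components, and ordinary singularities of line arrangements are automatically quasi-homogeneous). The conjecture is only open, per the paper, outside the class of line arrangements. So no correct proof can exist in the stated generality, and your decision not to claim one is the right call.

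Within that caveat, your analysis is sound and is exactly aligned with the paper's toolkit: weak combinatorics determines $d=\sum_i i\,k_i$ and, because $\tau_p=\mu_p=(m-1)^2$ at an ordinary quasi-homogeneous $m$-fold point (Saito), also $\tau(C)$; then the du Plessis--Wall criterion (Theorem \ref{dddp}, \cite{duP}) reduces the whole conjecture to showing $\operatorname{mdr}(f_2)=\operatorname{mdr}(f_1)$, with the inequality $r_2\le r_1$ coming for free from the upper bound and monotonicity of $t\mapsto t(d-1-t)$ on $[0,(d-1)/2]$. This is the same identity the paper exploits in Proposition \ref{ccc} and Theorem \ref{main}, which extract only a \emph{necessary} combinatorial condition (factorization of $\mathfrak{P}$) rather than sufficiency. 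The one substantive correction: you present the combinatoriality of $\operatorname{mdr}$ as ``not known,'' but the triangular counterexamples show it provably \emph{fails} --- $\operatorname{mdr}$ is not a function of $W(C)$ even for line arrangements --- so the program of bounding $r_2$ from below by $r_1$ using weak combinatorics alone cannot succeed without additional hypotheses (e.g., restricting to arrangements containing conics, which is precisely the regime where the paper says no counterexample is known). Your closing paragraph should therefore cite the known refutation rather than treat the gap as merely open.
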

This conjecture is a very natural generalization of the classical Terao's conjecture on line arrangements in the complex projective plane, but in this situation we focus on the intersection posets of line arrangements as the decisive objects for the freeness. It is well-known that the above Numerical Terao's Conjecture is false, or more precisely, it is false in the class of \textit{triangular line arrangements} \cite{Mar}. However, except the case of line arrangements, \textit{we are not aware of any counterexample}. To approach this problem, it is natural to construct some combinatorial conditions/constraints that will allow us to search for good candidates to construct potential counterexamples to this conjecture. Let us recall that in the class of line arrangements we have a very good object that helps us to search for the free arrangements, namely the reduced Poincar\'e polynomial.
\begin{definition}
Let $\mathcal{L}\subset \mathbb{P}^{2}_{\mathbb{C}}$ be an arrangement of $d$ lines. Then the reduced Poincar\'e polynomial of $\mathcal{L}$ is defined as
$$\pi_{0}(\mathcal{L};t) = 1 + (d-1)t + \bigg(\sum_{r\geq 2}(r-1)n_{r}-d+1\bigg)t^{2}.$$
\end{definition}
 It is worth recalling there that the Poincar\'e polynomial allows us to compute, for instance, the Betti numbers of the complex complement of $\mathcal{L}$, i.e., the Betti numbers of $M=\mathbb{P}^{2}_{\mathbb{C}} \setminus \bigcup_{H \in \mathcal{L}}H$. Moreover, the following result is true.
\begin{theorem}[{Terao's factorization, \cite{terao}}]
Let $\mathcal{L} \subset \mathbb{P}^{2}_{\mathbb{C}}$ be an arrangement of $d$ lines. Suppose that $\mathcal{L}$ is free, then $\pi_{0}(\mathcal{L};t)$ splits over the rational numbers and we have the following presentation:
$$\pi_{0}(\mathcal{L};t) = (1+d_{1}t)(1+d_{2}t),$$
where $(d_{1}, d_{2})$ with $d_{1} \leq d_{2}$, and satisfying $d_{1} + d_{2} = d-1$, are the exponents of $\mathcal{L}$.
\end{theorem}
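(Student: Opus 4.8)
The plan is to deduce the statement from Terao's general factorization theorem for central hyperplane arrangements by passing to the cone and then extracting the coefficients combinatorially.

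First I would replace the projective arrangement $\mathcal{L} \subset \mathbb{P}^{2}_{\mathbb{C}}$ by its cone $\widehat{\mathcal{A}} \subset \mathbb{C}^{3}$, the central arrangement of $d$ planes through the origin whose projectivizations are the lines of $\mathcal{L}$. Freeness of $\mathcal{L}$ is exactly freeness of $\widehat{\mathcal{A}}$: over $S = \mathbb{C}[x,y,z]$ the module $D(\widehat{\mathcal{A}})$ always splits off the Euler derivation $\theta_{E} = x\partial_{x} + y\partial_{y} + z\partial_{z}$ as a free summand of degree $1$, so in the free case $D(\widehat{\mathcal{A}}) = S\theta_{E} \oplus S\theta_{1} \oplus S\theta_{2}$ with $\deg\theta_{i} = d_{i}$; that is, $\exp(\widehat{\mathcal{A}}) = (1, d_{1}, d_{2})$, where $(d_{1}, d_{2})$ are by definition the exponents of $\mathcal{L}$.

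Next I would apply Terao's factorization theorem in its general form to the free arrangement $\widehat{\mathcal{A}}$, obtaining
$$\pi(\widehat{\mathcal{A}}; t) = \prod_{i=1}^{3}(1 + e_{i}t) = (1+t)(1+d_{1}t)(1+d_{2}t),$$
where $\pi(\widehat{\mathcal{A}}; t)$ is the Poincar\'e polynomial of the complement $\mathbb{C}^{3} \setminus \bigcup_{\widehat{H} \in \widehat{\mathcal{A}}} \widehat{H}$. Since the sum of the exponents equals the number of hyperplanes, $1 + d_{1} + d_{2} = d$, which is the asserted relation $d_{1} + d_{2} = d-1$. It then remains to match $\pi(\widehat{\mathcal{A}}; t)$ with the combinatorics. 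Using the Whitney--M\"obius formula $\pi(\widehat{\mathcal{A}}; t) = \sum_{X \in L(\widehat{\mathcal{A}})} |\mu(\widehat{0}, X)|\, t^{\operatorname{rk}(X)}$, the $d$ rank-one flats contribute $d\,t$, a rank-two flat arising from an $r$-fold point contributes $(r-1)t^{2}$ because $\mu = r-1$ there, and the coefficient of $t^{3}$ is forced by $\sum_{X} \mu(\widehat{0}, X) = 0$; this gives
$$\pi(\widehat{\mathcal{A}}; t) = 1 + d\,t + \Big(\sum_{r\geq 2}(r-1)n_{r}\Big)t^{2} + \Big(\sum_{r\geq 2}(r-1)n_{r} - d + 1\Big)t^{3}.$$
Because the affine complement fibers over $M = \mathbb{P}^{2}_{\mathbb{C}} \setminus \bigcup_{H\in\mathcal{L}} H$ with fiber $\mathbb{C}^{*}$, one has $\pi(\widehat{\mathcal{A}}; t) = (1+t)\,\pi_{0}(\mathcal{L}; t)$, so dividing by $(1+t)$ recovers exactly the polynomial of the Definition and, comparing with the factorization above, yields $\pi_{0}(\mathcal{L}; t) = (1+d_{1}t)(1+d_{2}t)$.

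The real obstacle is the general factorization theorem used in the third step; its proof is genuinely deep and rests either on Terao's Addition--Deletion theorem together with an induction on $|\mathcal{L}|$, or on a direct comparison of the graded Betti numbers of the free module $D(\widehat{\mathcal{A}})$ with the characteristic polynomial. If instead a self-contained argument in the plane were desired, I would run the Addition--Deletion induction directly in rank three, or combine the du Plessis--Wall identity $d_{1}d_{2} = (d-1)^{2} - \tau(\mathcal{L})$ for free curves with the ordinary-singularity Tjurina count $\tau(\mathcal{L}) = \sum_{r\geq 2}(r-1)^{2}n_{r}$ and the pencil identity $\sum_{r\geq 2} r(r-1)n_{r} = d(d-1)$, which reproduces the quadratic coefficient after a brief rearrangement.
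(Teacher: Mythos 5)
Your proposal is correct, but it is worth noting that the paper does not prove this statement at all: it is quoted verbatim as Terao's classical theorem with a citation to \cite{terao}, so there is no in-paper proof to match. Your main route --- coning $\mathcal{L}$ to a central arrangement $\widehat{\mathcal{A}} \subset \mathbb{C}^{3}$ with $\exp(\widehat{\mathcal{A}}) = (1, d_{1}, d_{2})$, invoking Terao's general factorization, and identifying $\pi_{0}$ via the M\"obius computation --- is exactly the standard argument behind that citation, and your coefficient computation checks out: the rank-two flat at an $r$-fold point indeed has $|\mu| = r-1$, and the $t^{3}$ coefficient follows from $\sum_{X}\mu(\widehat{0},X)=0$ (with the tacit assumption that $\widehat{\mathcal{A}}$ is essential; in the pencil case the quadratic coefficient vanishes and the factorization $(1+0\cdot t)(1+(d-1)t)$ is immediate). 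One small simplification: the fibration over $M$ with fiber $\mathbb{C}^{*}$ is unnecessary for your purposes, since the identity $1+dt+at^{2}+(a-d+1)t^{3} = (1+t)\bigl(1+(d-1)t+(a-d+1)t^{2}\bigr)$ with $a=\sum_{r\geq 2}(r-1)n_{r}$ is a purely formal polynomial division given the coefficients you already computed; the topology is only needed if one wants to interpret $\pi_{0}$ via Betti numbers of $M$. Most interestingly, the self-contained alternative you sketch at the end --- du Plessis--Wall giving $d_{1}d_{2} = (d-1)^{2}-\tau(\mathcal{L})$, combined with $\tau(\mathcal{L}) = \sum_{r\geq 2}(r-1)^{2}n_{r}$ (valid because ordinary points of line arrangements are locally cones, hence quasi-homogeneous) and the count $\sum_{r\geq 2}\binom{r}{2}n_{r} = \binom{d}{2}$ --- is precisely the paper's own method: it is the $k=0$ specialization of Proposition \ref{ccc} and Theorem \ref{main}. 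So your fallback route buys a proof avoiding the deep general factorization theorem entirely (at the cost of assuming du Plessis--Wall), while your main route situates the statement in the general theory of free hyperplane arrangements where it belongs.
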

Now we would like to introduce a new combinatorial object for conic-line arrangements that will play a similar role as the reduced Poincar\'e polynomial for line arrangements.
\begin{definition}
Let $\mathcal{CL} \subset \mathbb{P}^{2}_{\mathbb{C}}$ be an arrangement of $d$ lines and $k$ smooth conics such that the arrangement admits only ordinary intersection points. Then the combinatorial Poincar\'e polynomial of $\mathcal{CL}$ is defined as
\begin{equation}
\mathfrak{P}(\mathcal{CL};t) = 1 + (2k+d-1)t + \bigg( \sum_{r\geq 2}(r-1)n_{r} -d+1 \bigg)t^{2}.
\end{equation}
\end{definition}
It is clear from the definition that if we have two conic-line arrangements with the same vectors of weak combinatorics, then their combinatorial Poincar\'e polynomials are equal. At first glance, the combinatorial Poincar\'e polynomial for our conic-line arrangements looks like a rabbit out of a hat, but it turns out this is going to be a crucial concept for our research. It is also worth noting that if $k=0$, then we get exactly the classical reduced Poincar\'e polynomial for line arrangements. However, as we might expect, our combinatorial Poincar\'e polynomial does not decode information about the Betti numbers of the complement $M=\mathbb{P}^{2}_{\mathbb{C}} \setminus \bigcup_{C \in \mathcal{CL}}C$. On the other hand, as a consolation prize, the combinatorial Poincar\'e polynomial allows us to compute the Euler number of the complement, and for details we now refer to Remark \ref{euler} below.
\newpage 
Our main result in the paper is the following Terao-type factorization statement.

\begin{theorem}
\label{main}
Let $\mathcal{CL} \subset \mathbb{P}^{2}_{\mathbb{C}}$ be an arrangement of $d$ lines and $k$ smooth conics such that the arrangement admits ordinary quasi-homogeneous intersection points. Assume that $\mathcal{CL}$ is free, then the combinatorial Poincar\'e polynomial splits over the rational numbers and then it has the following presentation
$$\mathfrak{P}(\mathcal{CL};t) = (1+d_{1}t)(1+d_{2}t),$$
where $(d_{1},d_{2})$ are the exponents of $\mathcal{CL}$.
\end{theorem}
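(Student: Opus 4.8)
The plan is to compare the two quadratic polynomials coefficient by coefficient. First I would record that, since each conic contributes degree $2$ and each line degree $1$, the underlying reduced curve $\mathcal{CL}$ has degree $m = 2k + d$, so its exponents satisfy $d_{1} + d_{2} = m - 1 = 2k + d - 1$. This is precisely the coefficient of $t$ in $\mathfrak{P}(\mathcal{CL};t)$, so the linear terms of $\mathfrak{P}(\mathcal{CL};t)$ and of $(1+d_{1}t)(1+d_{2}t) = 1 + (d_{1}+d_{2})t + d_{1}d_{2}t^{2}$ agree automatically. The whole content of the theorem therefore collapses to the single identity
$$\sum_{r\geq 2}(r-1)n_{r} - d + 1 = d_{1}d_{2}.$$

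Next I would invoke the theory of free plane curves. By the du Plessis--Wall bound, realized as an equality exactly in the free case (Dimca's characterization of freeness via the global Tjurina number), a free reduced curve of degree $m$ with exponents $(d_{1},d_{2})$ satisfies $\tau(\mathcal{CL}) = (m-1)^{2} - d_{1}d_{2}$. Hence it suffices to show $\sum_{r\geq 2}(r-1)n_{r} - d + 1 = (m-1)^{2} - \tau(\mathcal{CL})$, i.e.\ to compute the total Tjurina number combinatorially. This is where the hypotheses enter decisively: an ordinary point of multiplicity $r$ consists of $r$ smooth branches (local pieces of lines and conics) meeting pairwise transversally, so its Milnor number is $(r-1)^{2}$; and since the point is assumed quasi-homogeneous, K.\ Saito's criterion forces $\tau = \mu = (r-1)^{2}$ locally. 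Summing over all singular points gives $\tau(\mathcal{CL}) = \sum_{r\geq 2}(r-1)^{2}n_{r}$.

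The remaining ingredient is a B\'ezout count of the intersection data. Since all intersections are transversal, any two components passing through a common point meet there with local multiplicity $1$, so a point of multiplicity $r$ absorbs exactly $\binom{r}{2}$ component-pairs. Summing the B\'ezout numbers over all pairs of components—$1$ for each of the $\binom{d}{2}$ line--line pairs, $2$ for each of the $dk$ line--conic pairs, and $4$ for each of the $\binom{k}{2}$ conic--conic pairs—yields
$$\sum_{r\geq 2}\binom{r}{2}n_{r} = \binom{d}{2} + 2dk + 4\binom{k}{2}.$$
Now I would combine the two computations. Using $\sum_{r}(r-1)n_{r} + \sum_{r}(r-1)^{2}n_{r} = \sum_{r}r(r-1)n_{r} = 2\sum_{r}\binom{r}{2}n_{r}$, the target identity $\sum_{r\geq 2}(r-1)n_{r} - d + 1 = (m-1)^{2} - \tau(\mathcal{CL})$ reduces, after substituting $m = 2k+d$ and the B\'ezout formula above, to the numerical check that both sides equal $d^{2} + 4k^{2} + 4dk - d - 4k$, which is a routine expansion.

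I expect the genuine obstacle to be conceptual rather than computational, and it lives in the second step: guaranteeing the local equality $\tau = \mu = (r-1)^{2}$. This is exactly why quasi-homogeneity cannot be dropped—without it one only has $\tau \leq \mu$, so the quadratic coefficient would merely bound, rather than equal, $d_{1}d_{2}$, and the factorization would fail. The other point demanding care is the B\'ezout bookkeeping with the degree-$2$ contributions of the conics, since this is precisely where the combinatorics departs from the classical line-arrangement identity $\sum_{r}\binom{r}{2}n_{r} = \binom{d}{2}$ underlying Terao's factorization.
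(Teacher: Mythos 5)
Your proposal is correct and follows essentially the same route as the paper: reduce to the identity $\sum_{r\geq 2}(r-1)n_{r}-d+1=d_{1}d_{2}$ (the paper's Proposition 3.1), establish it via the du Plessis--Wall equality $\tau(\mathcal{CL})=(2k+d-1)^{2}-d_{1}d_{2}$ for free curves, the local computation $\tau_{p}=\mu_{p}=(r-1)^{2}$ from quasi-homogeneity (Saito), and the B\'ezout count $\sum_{r\geq 2}\binom{r}{2}n_{r}=\binom{d}{2}+2dk+4\binom{k}{2}$. Your final algebraic verification (both sides equal $d^{2}+4k^{2}+4dk-d-4k$) matches the paper's bookkeeping with $f_{i}=\sum_{r\geq 2}r^{i}n_{r}$, and your remark on where quasi-homogeneity is indispensable is exactly the point the paper's argument hinges on.
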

Our result can be considered as a complementary tool that supports the investigations on conic-line arrangements presented in \cite{ST}.

The next goal is to explain how to construct meaningful combinatorial Poincar\'e polynomials when we allow to use non-ordinary singularities. Here we present our result devoted to a certain class of conic arrangements in the complex plane.

\begin{theorem}
\label{conics}
 Let $\mathcal{C} \subset \mathbb{P}^{2}_{\mathbb{C}}$ be an arrangement of $k\geq 2$ smooth conics admitting $n_{2}$ ordinary double points, $n_{3}$ ordinary triple points, $n_{4}$ ordinary quadruple points, $t_{3}$ singularities of type $A_{3}$, $t_{5}$ singularities of type $A_{5}$, and $t_{7}$ singularities of type $A_{7}$. If $\mathcal{C}$ is free with exponents $(d_{1},d_{2})$, then its combinatorial Poincar\'e polynomial defined as 
 $$\mathfrak{P}(\mathcal{C};t) = 1 + (2k-1)t + \bigg(n_{2} + 2n_{3} + 3n_{4} +t_{3} + t_{5} + t_{7} + 1\bigg)t^{2}$$
 splits over the rationals, and we have $\mathfrak{P}(\mathcal{C};t) = (1+d_{1}t)(1+d_{2}t).$
\end{theorem}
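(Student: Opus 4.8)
The plan is to identify both coefficients of $\mathfrak{P}(\mathcal{C};t)$ with those of the product $(1+d_1t)(1+d_2t) = 1 + (d_1+d_2)t + d_1 d_2\,t^2$ by appealing to the numerical characterization of free plane curves. Since $\mathcal{C}$ consists of $k$ smooth conics, its defining polynomial $f$ has degree $m = 2k$. For a free curve the exponents satisfy $d_1 + d_2 = m-1 = 2k-1$, which is precisely the linear coefficient of $\mathfrak{P}(\mathcal{C};t)$, so the linear terms agree automatically. It therefore remains only to show that the quadratic coefficient
\[
N := n_2 + 2n_3 + 3n_4 + t_3 + t_5 + t_7 + 1
\]
equals $d_1 d_2$. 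For this I would invoke the du Plessis--Wall / Dimca relation for free curves, namely $d_1 d_2 = (m-1)^2 - \tau(\mathcal{C}) = (2k-1)^2 - \tau(\mathcal{C})$, where $\tau(\mathcal{C})$ denotes the total (global) Tjurina number. Thus the whole statement reduces to proving the single numerical identity $(2k-1)^2 - \tau(\mathcal{C}) = N$.

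The second step is to evaluate $\tau(\mathcal{C})$ from the weak combinatorics. All the prescribed singularities are quasi-homogeneous, so locally $\tau = \mu$ at each point. An ordinary point of multiplicity $r$ has $\mu = (r-1)^2$, giving local Tjurina numbers $1, 4, 9$ for the double, triple and quadruple points, while an $A_{2j-1}$ singularity has $\mu = 2j-1$, giving $3, 5, 7$ for $A_3, A_5, A_7$. Summing over all singular points yields
\[
\tau(\mathcal{C}) = n_2 + 4n_3 + 9n_4 + 3t_3 + 5t_5 + 7t_7.
\]

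The third and decisive step is a Bézout count. Because each conic is smooth, every singular point of $\mathcal{C}$ is an intersection of two or more of the $k$ conics, and I would record the local intersection multiplicity contributed by each singularity type: a node contributes $1$, an ordinary triple point $\binom{3}{2}=3$, an ordinary quadruple point $\binom{4}{2}=6$, while the tacnode-type singularities $A_3, A_5, A_7$ (two smooth branches meeting with contact order $2, 3, 4$) contribute $2, 3, 4$ respectively. Summing these pairwise intersection multiplicities over all $\binom{k}{2}$ pairs of conics and comparing with Bézout's theorem (each pair meets in $2\cdot 2 = 4$ points counted with multiplicity) gives
\[
n_2 + 3n_3 + 6n_4 + 2t_3 + 3t_5 + 4t_7 = 4\binom{k}{2} = 2k^2 - 2k.
\]
Feeding this relation into $(2k-1)^2 - \tau(\mathcal{C})$ and using the elementary identity $(2k-1)^2 - 1 = 2(2k^2 - 2k)$, a direct substitution collapses the expression to $N$, which establishes $d_1 d_2 = N$ and hence the asserted factorization.

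I expect the main obstacle to lie in the Bézout step rather than in the final algebra: one must ensure that at every singular point \emph{all} local branches genuinely belong to distinct conics (guaranteed here by smoothness, since a single smooth conic contributes only one smooth branch through any point), and that the listed singularity types exhaust the cases permitted by the hypotheses, so that both global counts are complete. Once the local intersection multiplicities and local Tjurina numbers are correctly tabulated, the remainder is the routine verification that the Bézout identity turns $(2k-1)^2 - \tau(\mathcal{C})$ into $N$.
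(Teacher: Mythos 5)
Your proposal is correct and follows essentially the same route as the paper: the du Plessis--Wall identity $d_1d_2 = (2k-1)^2 - \tau(\mathcal{C})$, the tabulation $\tau(\mathcal{C}) = n_2 + 4n_3 + 9n_4 + 3t_3 + 5t_5 + 7t_7$ via quasi-homogeneity, and the B\'ezout count $4\binom{k}{2} = n_2 + 3n_3 + 6n_4 + 2t_3 + 3t_5 + 4t_7$ are exactly the three ingredients the paper combines. Your final substitution using $(2k-1)^2 - 1 = 2\cdot 4\binom{k}{2}$ matches the paper's concluding algebra, so nothing is missing.
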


Then we focus on the freeness of $d$-arrangements in the complex projective plane.
\begin{definition}
We say that an arrangement of plane curves $\mathcal{C} = \{C_{1}, ...,C_{k}\}\subset \mathbb{P}^{2}_{\mathbb{C}}$ is called as a $d$-arrangement if
\begin{itemize}
\item all curves $C_{i}$ are smooth of the same degree $d\geq 1$,
\item all intersection points of $\mathcal{C}$ are ordinary singularities.
\end{itemize}
\end{definition}
Our main contribution in this setting is the following result which can be seen as a generalization of \cite[Theorem 5.2]{PP2023}.
\begin{theorem}
\label{dd}
Let $\mathcal{C}$ be a $d$-arrangement with $k \geq 2$ and $d\geq 2$. Assume that all singularities of $\mathcal{C}$ are quasi-homogeneous and $\mathcal{C}$ is free. Then the following inequality holds:
\begin{equation}
    \sum_{r\geq 2}(r^{2}-5r+4) n_{r} \geq 3 + 3dk(d-2)
\end{equation}
\end{theorem}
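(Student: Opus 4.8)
The plan is to combine two standard numerical identities available for a free plane curve --- one coming from the global Tjurina number together with the freeness characterization, the other from B\'ezout's theorem --- and then to recognize the quantity $\sum_{r\ge 2}(r^2-5r+4)n_r$ as a specific linear combination of them.

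First I would compute the global Tjurina number of $\mathcal{C}$. Since every singular point is an ordinary $r$-fold point, it is locally defined by $r$ distinct lines through the origin, hence it is a (quasi-)homogeneous singularity with Milnor number $(r-1)^2$; by the hypothesis that all singularities are quasi-homogeneous one has $\tau_p = \mu_p$, so each $r$-fold point contributes $(r-1)^2$ to the total Tjurina number. Summing gives $\tau(\mathcal{C}) = \sum_{r\ge 2}(r-1)^2 n_r$. Because $\mathcal{C}$ is free of degree $m = kd$ with exponents $(d_1,d_2)$, the du Plessis--Wall/Dimca characterization of free curves yields $d_1 + d_2 = kd - 1$ together with
\[
\sum_{r\ge 2}(r-1)^2 n_r \;=\; \tau(\mathcal{C}) \;=\; (kd-1)^2 - d_1 d_2.
\]
Next I would record the B\'ezout identity. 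Writing $\mathcal{C} = \{C_1, \dots, C_k\}$, each pair $C_i, C_j$ meets in $d^2$ points counted with multiplicity, and at an ordinary $r$-fold point exactly $\binom{r}{2}$ of these transversal pairwise intersections occur; summing over all pairs gives
\[
\sum_{r\ge 2}\binom{r}{2}n_r \;=\; \binom{k}{2}d^2,
\qquad\text{equivalently}\qquad
\sum_{r\ge 2} r(r-1)\, n_r \;=\; k(k-1)d^2.
\]

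The decisive step is the algebraic recombination $r^2 - 5r + 4 = 4(r-1)^2 - 3\,r(r-1)$, which lets me write
\[
\sum_{r\ge 2}(r^2-5r+4)n_r \;=\; 4\,\tau(\mathcal{C}) - 3\,k(k-1)d^2 \;=\; 4(kd-1)^2 - 4 d_1 d_2 - 3k(k-1)d^2.
\]
A direct simplification of the difference between this expression and $3 + 3dk(d-2)$ collapses to $(kd-1)^2 - 4 d_1 d_2$, and since $d_1 + d_2 = kd-1$ this equals $(d_1+d_2)^2 - 4 d_1 d_2 = (d_1 - d_2)^2 \ge 0$, which is exactly the claimed inequality (with equality precisely when $d_1 = d_2$, i.e.\ when $kd$ is odd).

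I expect no serious analytic obstacle here: the only points requiring genuine care are justifying the local contribution $\tau_p = (r-1)^2$ (which is where the quasi-homogeneity hypothesis enters, via $\tau = \mu$ for ordinary multiple points) and spotting the correct coefficients $4$ and $-3$ in the combination $4(r-1)^2 - 3\,r(r-1)$. Once these are in place, the argument is a short computation that terminates in the perfect square $(d_1-d_2)^2$.
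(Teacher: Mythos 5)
Your proposal is correct and takes essentially the same route as the paper: both rest on $\tau(\mathcal{C})=\sum_{r\geq 2}(r-1)^{2}n_{r}$ (ordinary points plus the quasi-homogeneity hypothesis giving $\tau_{p}=\mu_{p}$, combined with du Plessis--Wall) and the B\'ezout count $\sum_{r\geq 2}\binom{r}{2}n_{r}=\binom{k}{2}d^{2}$, and your final perfect square $(d_{1}-d_{2})^{2}\geq 0$ is precisely the paper's non-negative discriminant $\triangle_{d_{1}}$ of its quadratic in $d_{1}$, made explicit (and yielding the equality case as a bonus). One small wording caveat: an ordinary $r$-fold point is not automatically analytically equivalent to $r$ concurrent lines, so quasi-homogeneity is a genuine hypothesis rather than a consequence of ordinarity -- but since you invoke that hypothesis anyway to get $\tau_{p}=\mu_{p}=(r-1)^{2}$, your argument is unaffected.
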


Let us present an outline of our paper. In Section $2$ we present all necessary preliminaries devoted to free plane curves admitting quasi-homogeneous singularities and then, in Section $3$, we present our proofs of Theorems \ref{main}, \ref{conics}, and \ref{dd}.

We work exclusively over the complex numbers.
\section{Basics on free plane curves}

We follow the notation introduced in \cite{Dimca}. Let us denote by $S := \mathbb{C}[x,y,z]$ the coordinate ring of $\mathbb{P}^{2}_{\mathbb{C}}$. For a homogeneous polynomial $f \in S$ let $J_{f}$ denote the Jacobian ideal associated with $f$, i.e., the ideal of the form $J_{f} = \langle \partial_{x}\, f, \partial_{y} \, f, \partial_{z} \, f \rangle$.
\begin{definition}
Let $p$ be an isolated singularity of a polynomial $f\in \mathbb{C}[x,y]$. Since we can change the local coordinates, assume that $p=(0,0)$.
\begin{itemize}
    
\item The number 
$$\mu_{p}=\dim_\mathbb{C}\left(\mathbb{C}\{x,y\} /\bigg\langle \partial_{x}\, f ,\partial_{y}\, f \bigg\rangle\right)$$
is called the Milnor number of $f$ at $p$.

\item The number
$$\tau_{p}=\dim_\mathbb{C}\left(\mathbb{C}\{x,y\}/\bigg\langle f, \partial_{x}\, f ,\partial_{y} \, f \bigg\rangle \right)$$
is called the Tjurina number of $f$ at $p$.

\end{itemize}
\end{definition}

The total Tjurina number of a given reduced curve $C \subset \mathbb{P}^{2}_{\mathbb{C}}$ is defined as
$${\rm deg}(J_{f}) = \tau(C) = \sum_{p \in {\rm Sing}(C)} \tau_{p}.$$ 

Recall that a singularity is called quasi-homogeneous if and only if there exists a holomorphic change of variables so that the defining equation becomes weighted homogeneous. If $C : f=0$ is a reduced plane curve with only quasi-homogeneous singularities,  then by \cite[Satz]{KS} one has $\tau_{p}=\mu_{p}$ for all $p \in {\rm Sing}(C)$, and eventually
$$\tau(C) = \sum_{p \in {\rm Sing}(C)} \tau_{p} = \sum_{p \in {\rm Sing}(C)} \mu_{p} = \mu(C),$$
which means that the total Tjurina number is equal to the total Milnor number of $C$.

Next, we will need an important invariant that is defined in the language of the syzygies of $J_{f}$.
\begin{definition}
Consider the graded $S$-module of Jacobian syzygies of $f$, namely $$AR(f)=\{(a,b,c)\in S^3 : a\partial_{x} \, f + b \partial_{y} \, f + c \partial_{z} \, f = 0 \}.$$
The minimal degree of non-trivial Jacobian relations for $f$ is defined to be 
$${\rm mdr}(f):=\min_{r\geq 0}\{AR(f)_r\neq 0\}.$$ 
\end{definition}
\begin{remark}
If $C: f=0$ is a reduced plane curve in $\mathbb{P}^{2}_{\mathbb{C}}$, then we write ${\rm mdr}(f)$ or ${\rm mdr}(C)$ interchangeably.
\end{remark}
Let us now formally define the freeness of a reduced plane curve \cite{KS1}.
\begin{definition}
A reduced curve $C \subset \mathbb{P}^{2}_{\mathbb{C}}$ of degree $d$ is free if the Jacobian ideal $J_{f}$ is saturated with respect to $\mathfrak{m} = \langle x,y,z\rangle$. Moreover, if $C$ is free, then the pair $(d_{1}, d_{2}) = ({\rm mdr}(f), d - 1 - {\rm mdr}(f))$ is called the exponents of $C$.
\end{definition}
It is difficult to check the freeness property using the above definition. However, it turns out that we can use the following result due to du Plessis and Wall \cite{duP}.
\begin{theorem}
\label{dddp}
Let $C : f=0$ be a reduced curve in $\mathbb{P}^{2}_{\mathbb{C}}$. One has
\begin{equation}
\label{duPles}
(d-1)^{2} - d_{1}(d-d_{1}-1) = \tau(C)
\end{equation}
if and only if $C : f=0 $ is a free curve, and then ${\rm mdr}(f) = d_{1} \leq (d-1)/2$.
\end{theorem}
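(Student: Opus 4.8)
The plan is to reduce the claimed inequality to the du Plessis--Wall characterization of freeness (Theorem~\ref{dddp}) by expressing both the total Tjurina number and the global Bézout intersection count in terms of the numbers $n_r$ of $r$-fold points. Write $D := kd$ for the total degree of $\mathcal{C}$. Since every singularity of a $d$-arrangement is an ordinary $r$-fold point, hence weighted homogeneous, the Milnor--Tjurina equality for quasi-homogeneous singularities (\cite{KS}, recalled in Section~2) gives $\tau_p = \mu_p = (r-1)^2$ at each $r$-fold point. Summing over all singularities,
\[\tau(\mathcal{C}) = \sum_{r \ge 2} (r-1)^2\, n_r.\]

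First I would record a second, purely combinatorial identity coming from Bézout's theorem. Each of the $k$ smooth components has degree $d$, so any two distinct components meet in $d^2$ points counted with multiplicity; since the components are smooth and the singularities are ordinary, the $r$ branches through an $r$-fold point belong to $r$ distinct components and meet pairwise transversally, contributing $\binom{r}{2}$ to the global pairwise intersection count. Equating the two ways of counting,
\[\sum_{r \ge 2} \binom{r}{2} n_r = \binom{k}{2} d^2, \qquad \text{that is} \qquad \sum_{r \ge 2} r(r-1)\, n_r = k(k-1)\, d^2.\]

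The linchpin is then the elementary identity $r^2 - 5r + 4 = 4(r-1)^2 - 3\,r(r-1)$, which lets me rewrite the left-hand side of the target as
\[\sum_{r \ge 2} (r^2 - 5r + 4)\, n_r = 4\,\tau(\mathcal{C}) - 3 \sum_{r \ge 2} r(r-1)\, n_r = 4\,\tau(\mathcal{C}) - 3k(k-1)\, d^2.\]
So it suffices to bound $\tau(\mathcal{C})$ from below. Here I invoke Theorem~\ref{dddp}: since $\mathcal{C}$ is free of degree $D = kd$ with exponents $(d_1, d_2)$, we have $d_1 + d_2 = D-1$ and $\tau(\mathcal{C}) = (D-1)^2 - d_1 d_2$, with $d_1 \le (D-1)/2$. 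By the arithmetic--geometric mean inequality $d_1 d_2 \le (D-1)^2/4$, whence $\tau(\mathcal{C}) \ge \tfrac{3}{4}(D-1)^2$, that is $4\,\tau(\mathcal{C}) \ge 3(kd-1)^2$.

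Substituting this bound and simplifying,
\[4\,\tau(\mathcal{C}) - 3k(k-1)d^2 \ \ge\ 3(kd-1)^2 - 3k(k-1)d^2 = 3kd^2 - 6kd + 3 = 3 + 3dk(d-2),\]
which is exactly the asserted inequality. The main obstacle is not any single deep step but making the three inputs mesh exactly: one must be sure the ordinary quasi-homogeneous hypothesis genuinely forces $\tau_p = (r-1)^2$ (so that $4\tau$ and the Bézout term combine over the common coefficient $r(r-1)$), and the whole argument hinges on the du Plessis--Wall lower bound $\tau \ge \tfrac{3}{4}(D-1)^2$ being sharp enough; the remaining algebra is just bookkeeping around the identity $r^2 - 5r + 4 = 4(r-1)^2 - 3r(r-1)$.
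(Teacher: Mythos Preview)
Your proof is correct and essentially the same as the paper's: both combine $\tau(\mathcal{C})=\sum_{r}(r-1)^{2}n_{r}$, the B\'ezout count $\sum_{r}r(r-1)n_{r}=k(k-1)d^{2}$, and the du~Plessis--Wall formula, then extract the inequality from the bound $d_{1}d_{2}\le (dk-1)^{2}/4$. The only cosmetic difference is that the paper phrases this last step as nonnegativity of the discriminant of the resulting quadratic in $d_{1}$, whereas you invoke AM--GM on the exponents and use the identity $r^{2}-5r+4=4(r-1)^{2}-3r(r-1)$ to organize the same bookkeeping.
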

\section{Combinatorial Poincar\'e polynomials versus freeness}
We start this section by the following.
\begin{proposition}
\label{ccc}
Let $\mathcal{CL}$ be an arrangement of $d$ lines and $k$ smooth conics in the complex projective plane. Assume that $\mathcal{CL}$ admits only ordinary quasi-homogeneous singularites and let $\mathcal{CL}$ be free with exponents $(d_{1}, d_{2})$. Then
\begin{equation}
\sum_{r\geq 2}(r-1)n_{r} - d + 1 = d_{1}d_{2}.
\end{equation}
\end{proposition}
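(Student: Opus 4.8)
The plan is to combine the du Plessis--Wall characterization of freeness (Theorem~\ref{dddp}) with two elementary computations: the total Tjurina number of $\mathcal{CL}$, and a Bézout-type count of the weighted intersection points. Throughout, write $D = d + 2k$ for the degree of the defining polynomial $f$ of $\mathcal{CL}$, and recall that the exponents of a free curve satisfy $d_{1} + d_{2} = D - 1$.

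First I would compute $\tau(\mathcal{CL})$. Every singular point is an ordinary $r$-fold point, i.e.\ locally $r$ smooth branches (pieces of lines or conics) meeting pairwise transversally; such a point has local equation a product of $r$ distinct linear forms, hence is homogeneous with Milnor number $\mu_{p} = (r-1)^{2}$. Since all singularities are quasi-homogeneous, $\tau_{p} = \mu_{p}$ by \cite{KS}, and therefore $\tau(\mathcal{CL}) = \sum_{r\ge 2}(r-1)^{2} n_{r}$. Next, since $\mathcal{CL}$ is free, Theorem~\ref{dddp} gives $(D-1)^{2} - d_{1}(D-1-d_{1}) = \tau(\mathcal{CL})$; using $D - 1 - d_{1} = d_{2}$ this rearranges to $d_{1}d_{2} = (D-1)^{2} - \tau(\mathcal{CL})$. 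Thus it suffices to verify the purely combinatorial identity
\[
(D-1)^{2} - \sum_{r\ge 2}(r-1)^{2} n_{r} = \sum_{r\ge 2}(r-1)n_{r} - d + 1.
\]
Moving the $(r-1)^{2}$ term to the right and combining $(r-1)^{2}+(r-1) = r(r-1)$, this is equivalent to
\[
\sum_{r\ge 2} r(r-1)\,n_{r} = (D-1)^{2} + d - 1 = D^{2} - 2D + d.
\]

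The heart of the argument is therefore establishing this last identity via Bézout. The quantity $\sum_{r\ge 2}\binom{r}{2} n_{r}$ counts, summed over all singular points, the number of pairs of branches meeting there. Because each component of $\mathcal{CL}$ is smooth, it contributes at most one branch at any point, and because the crossings are ordinary, each such pair of branches comes from two distinct components meeting with local intersection multiplicity $1$. Hence $\sum_{r\ge 2}\binom{r}{2} n_{r}$ equals the total pairwise intersection number $\sum_{\{C_{i},C_{j}\}}\deg C_{i}\cdot\deg C_{j}$. Splitting by component type (line--line, line--conic, conic--conic) gives $\binom{d}{2}\cdot 1 + dk\cdot 2 + \binom{k}{2}\cdot 4$; equivalently, from $D^{2} = \sum_{i}(\deg C_{i})^{2} + 2\sum_{\{C_{i},C_{j}\}}\deg C_{i}\deg C_{j}$ together with $\sum_{i}(\deg C_{i})^{2} = d\cdot 1 + k\cdot 4 = d + 4k$, one obtains $\sum_{r\ge 2} r(r-1) n_{r} = D^{2} - (d+4k)$.

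Finally, a one-line check closes the proof: since $D = d + 2k$, we have $D^{2} - (d+4k) = D^{2} - 2(d+2k) + d = D^{2} - 2D + d$, which is exactly the required identity. The only place demanding genuine care is the Bézout bookkeeping in the third paragraph: one must invoke the smoothness of the components to guarantee that no component contributes two local branches at a single point, and correctly weight the three kinds of component pairs so that the count of branch-pairs matches the sum of local intersection multiplicities. Everything else is the du Plessis--Wall formula plus routine algebra.
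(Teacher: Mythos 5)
Your proof is correct and takes essentially the same route as the paper's: the du Plessis--Wall identity rearranged to $d_{1}d_{2} = (D-1)^{2} - \tau(\mathcal{CL})$, the computation $\tau(\mathcal{CL}) = \mu(\mathcal{CL}) = \sum_{r\geq 2}(r-1)^{2}n_{r}$ via Saito's criterion, and the identical Bézout count $4\binom{k}{2} + 2kd + \binom{d}{2} = \sum_{r\geq 2}\binom{r}{2}n_{r}$, differing from the paper only in bookkeeping (you isolate one combinatorial identity, the paper manipulates the moments $f_{i} = \sum_{r} r^{i}n_{r}$). One harmless caveat: an ordinary $r$-fold point need not be analytically equivalent to a product of $r$ linear forms (which is precisely why quasi-homogeneity is a separate hypothesis, relevant for $r \geq 5$), but since $\mu_{p} = (r-1)^{2}$ holds for any ordinary $r$-fold point and you invoke \cite{KS} for $\tau_{p} = \mu_{p}$, your conclusion stands.
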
 
\begin{proof}
Since $\mathcal{CL} : f = 0$ is free, by Theorem \ref{dddp} we have 
\begin{equation}
\label{cc}
\tau(\mathcal{CL}) = (2k+d-1)^{2} - d_{1}(2k+d-d_{1}-1),
\end{equation}
where $d_{1} = {\rm mdr}(f)$. Moreover, since $2k+d - 1 = d_{1}+d_{2}$, then we have
$$\tau(\mathcal{CL}) = (2k+d-1)^{2} - d_{1}(2k+d-d_{1}-1) = (2k+d-1)^{2} - d_{1}(d_{1}+d_{2}-d_{1}) = (2k+d-1)^{2} - d_{1}d_{2}.$$

Now we need to compute $\tau(\mathcal{CL})$. Since our singularities are ordinary and quasi-homogeneous, then
$$\tau(\mathcal{CL}) = \mu(\mathcal{CL}) = \sum_{p \in {\rm Sing}(\mathcal{CL})}({\rm mult}_{p}(\mathcal{CL})-1)^{2} = \sum_{r\geq 2}(r-1)^{2}n_{r}.$$
Let us denote by $f_{i} = \sum_{r\geq 2}r^{i}n_{r}$, then
$$\tau(\mathcal{CL}) = f_{2}-2f_{1} + f_{0}.$$
Observe that the following naive combinatorial count holds (in fact, this count holds regardless of whether the singularities are quasi-homogeneous or not):
$$4\binom{k}{2} + 2kd + \binom{d}{2} = \sum_{r\geq 2}\binom{r}{2}n_{r},$$
and we can write it as 
$$4k^{2}-4k + 4kd + d^{2} - d = f_{2}-f_{1}.$$
Moreover, since 
$$4k^{2}-4k + 4kd + d^{2} - d =(2k+d-1)^{2} + d - 1,$$
we finally get
$$(2k+d-1)^{2} + d - 1 = f_{2} - f_{1}.$$
Let us come back to $\tau(\mathcal{CL})$, we have
$$\tau(\mathcal{CL}) = f_{2} - f_{1} + (-f_{1}+f_{0}) = (2k+d-1)^{2} + d - 1 - f_{1}+f_{0}.$$
Plugging this identity to \eqref{cc}, we arrive at
$$(2k+d-1)^{2} + d - 1 - f_{1}+f_{0} = (2k+d-1)^{2} - d_{1}d_{2},$$
so finally
$$f_{1} - f_{0} - d + 1 = d_{1}d_{2},$$
which completes the proof.
\end{proof}
Now we are ready to give our proof of Theorem \ref{main}.
\begin{proof}
By our assumptions, $\mathcal{CL}$ is a free arrangement of $d$ lines and $k$ smooth conics with only ordinary quasi-homogeneous singularities, hence $d_{1} + d_{2} = 2k+d-1$, and by Proposition \ref{ccc} one has 
$$\sum_{r\geq 2}(r-1)n_{r} -d+1 =d_{1}d_{2},$$
where $(d_{1},d_{2})$ are the exponents of $\mathcal{CL}$. This gives us 
\begin{multline*}
\mathfrak{P}(\mathcal{CL};t) = 1 + (2k+d-1)t + \bigg( \sum_{r\geq 2}(r-1)n_{r} -d+1 \bigg)t^{2} = \\ 1 + (d_{1}+d_{2})t + d_{1}d_{2}t^{2} = (1+d_{1}t)(1+d_{2}t),    
\end{multline*}
which completes then proof.
\end{proof}

Let us now present how Theorem \ref{main} works in practice.
\begin{example}
Consider the conic-line arrangement $\mathcal{CL} \subset \mathbb{P}^{2}_{\mathbb{C}}$ defined as follows:
\begin{equation*}
\mathcal{CL} \, : (-24x^{2}-23y^{2}+76yz+195z^{2})(y-3x-5z)(y+3x-5z)(y+z)(y-3z)x(x+y+z) = 0.
\end{equation*}
The weak-combinatorics of $\mathcal{CL}$ has the following form
$$W(\mathcal{CL}) = (k_{1}, k_{2}; n_{2}, n_{3}, n_{4}) = (6,1;12,3,1).$$
The combinatorial Poincar\'e polynomial of $\mathcal{CL}$ has the form
$$\mathfrak{P}(\mathcal{CL};t) = 1+7t+16t^2$$
and it does not split over the rationals, so $\mathcal{CL}$ cannot be free.
\end{example}
\begin{example}
This example comes from \cite{PP2024}. Let us consider the following conic-line arrangement defined as follows:
\begin{align*}
\mathcal{CL}  : xy(x-z)(x+z)(y-z)(y+z)(y-x-z)&(y-x+z)(y-x)\\&(-x^2+xy-y^2+z^2)=0.      
\end{align*}
The weak-combinatorics of $\mathcal{CL}$ has the following form
$$W(\mathcal{CL}) = (k_{1}, k_{2}; n_{2}, n_{3}, n_{4}) = (9, 1; 6, 4,6).$$
Recall that by \cite[Theorem 1.3]{PP2024} the arrangement $\mathcal{CL}$ is free with exponents $(4,6)$ -- this can be also checked directly by using Theorem \ref{dddp} and the fact that $\tau(\mathcal{CL})=76$. We compute the combinatorial Poincar\'e polynomial of $\mathcal{CL}$, namely
$$\mathfrak{P}(\mathcal{CL};t) = 1+10t+24t^2 = (1+4t)(1+6t),$$
hence the splitting over the rationals holds. 
\end{example}
\begin{example}
Consider the following line arrangement $\mathcal{L} \subset \mathbb{P}^{2}_{\mathbb{C}}$ given by
$$Q(x,y,z) = xy(x+y+z)(x+y+2z)(x+3y+z)(5x+y+z).$$
Note that in this case we have $W(\mathcal{L}) = (k_{1}; n_{2},n_{3}) = (6;9,2)$ and 
$$\pi_{0}(\mathcal{L};t) = \mathfrak{P}(\mathcal{L};t) = 1+5t + 8t^{2}.$$
We see that $\pi_{0}(\mathcal{L};t)$ does not split over the rationals, so $\mathcal{L}$ cannot be free. However, this arrangement is \textit{plus-one generated} with exponents $(3,3,4)$ -- see \cite{ABE} for necessary details regarding plus-one generated arrangements.
\end{example}
\begin{remark}
\label{euler}
Let us consider the complement $M=\mathbb{P}^{2}_{\mathbb{C}} \setminus \bigcup_{C \in \mathcal{CL}}C$, where $\mathcal{CL}\subset \mathbb{P}^{2}_{\mathbb{C}}$ is an arrangement of $d$ lines and $k$ smooth conics admitting only ordinary singularities. Recall that the Betti polynomial of $M$ has the form
$$B_{M}(t) = 1 + (k+d-1)t+ \bigg(\sum_{r\geq 2}(r-1)n_{r}-d-k+1 \bigg)t^{2},$$
see \cite[Section 2.1]{Cog} and \cite[Section 1.2]{ST} for further explanations.
Observe that the following equality holds
\begin{equation}
    \mathfrak{P}(\mathcal{CL};t) = B_{M}(t) + kt(t+1).
\end{equation}
Then the Euler number of $M$ is equal to
$$e(M) = B_{M}(-1) = \mathfrak{P}(\mathcal{CL};-1),$$
which means, somehow surprisingly, that the Euler number of $M$ can be computed directly via $\mathfrak{P}(\mathcal{CL};t)$.
\end{remark}
As we have mentioned in Introduction, our idea of the combinatorial Poincar\'e polynomial can be extended to the case of non-ordinary quasi-homogeneous singularities. As an exemplary result, we focus on conic arrangements in the plane admitting some non-ordinary singularities. We use the notation of local normal forms of plane curve singularties from \cite{arnold}.
\begin{definition}
Let $\mathcal{C} \subset \mathbb{P}^{2}_{\mathbb{C}}$ be an arrangement of $k\geq 2$ smooth conics admitting $n_{2}$ ordinary double points, $n_{3}$ ordinary triple points, $n_{4}$ ordinary quadruple points, $t_{3}$ singularities of type $A_{3}$, $t_{5}$ singularities of type $A_{5}$, and $t_{7}$ singularities of type $A_{7}$. Then the Poincar\'e polynomial of $\mathcal{C}$ is defined as
$$\mathfrak{P}(\mathcal{C};t) = 1 + (2k-1)t + \bigg(n_{2} + 2n_{3} + 3n_{4} +t_{3} + t_{5} + t_{7} + 1\bigg)t^{2}.$$
\end{definition}
This definition follows from the following observation, which also gives us a direct proof of Theorem \ref{conics}.
\begin{proposition}
Let $\mathcal{C} \subset \mathbb{P}^{2}_{\mathbb{C}}$ be an arrangement of $k\geq 2$ smooth conics admitting $n_{2}$ ordinary double points, $n_{3}$ ordinary triple points, $n_{4}$ ordinary quadruple points, $t_{3}$ singularities of type $A_{3}$, $t_{5}$ singularities of type $A_{5}$, and $t_{7}$ singularities of type $A_{7}$. Assume that $\mathcal{C}$ is free with exponents $(d_{1},d_{2})$, then
\begin{equation}
\label{con}
n_{2}+2n_{3}+3n_{4} + t_{3}+t_{5}+t_{7} + 1 = d_{1}d_{2}.
\end{equation}
\end{proposition}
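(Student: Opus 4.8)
The plan is to follow the same strategy as in the proof of Proposition \ref{ccc}, combining the du Plessis--Wall freeness criterion with a Bézout-type intersection count; the only new ingredient is the bookkeeping for the non-ordinary singularities $A_3, A_5, A_7$. Since $\mathcal{C} : f = 0$ is a reduced curve of degree $2k$ which is free with exponents $(d_1, d_2)$, I would first invoke Theorem \ref{dddp} with $d = 2k$ and $d_1 + d_2 = 2k - 1$ to obtain
\[
\tau(\mathcal{C}) = (2k-1)^2 - d_1(2k - d_1 - 1) = (2k-1)^2 - d_1 d_2.
\]

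Next I would compute $\tau(\mathcal{C})$ directly from the local data. Each singularity in the list is quasi-homogeneous: the ordinary points are weighted homogeneous, and the normal forms $x^2 + y^{2m}$ of the $A_{2m-1}$ points are weighted homogeneous as well, so by \cite{KS} one has $\tau_p = \mu_p$ at every $p \in {\rm Sing}(\mathcal{C})$. Using that an ordinary $r$-fold point has $\mu_p = (r-1)^2$ while $A_{2m-1}$ has $\mu_p = 2m-1$, this gives
\[
\tau(\mathcal{C}) = n_2 + 4n_3 + 9n_4 + 3t_3 + 5t_5 + 7t_7.
\]

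The heart of the argument is the combinatorial count. Since any two distinct smooth conics meet in exactly $4$ points counted with multiplicity, Bézout yields
\[
4\binom{k}{2} = \sum_{1 \le i < j \le k}(C_i \cdot C_j) = \sum_{p \in {\rm Sing}(\mathcal{C})} \ \sum_{i<j}(C_i \cdot C_j)_p .
\]
Here I would evaluate the local contribution at each point type: at an ordinary $r$-fold point the $r$ branches are pairwise transversal, contributing $\binom{r}{2}$, whereas at an $A_{2m-1}$ point exactly two conics meet with contact order $m$, contributing $m$. This produces
\[
2k(k-1) = n_2 + 3n_3 + 6n_4 + 2t_3 + 3t_5 + 4t_7 .
\]
The delicate step, and the one I would be most careful about, is identifying these contact orders: $A_3, A_5, A_7$ correspond to two conics tangent to orders $2,3,4$ respectively (equivalently, the local intersection multiplicity of the two smooth branches of $A_n$ is $m = (n+1)/2$), which accounts for the coefficients $2,3,4$ above.

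Finally I would combine the three displays. Writing $(2k-1)^2 = 4k^2 - 4k + 1 = 2\big(2k(k-1)\big) + 1$ and substituting the Bézout identity, then subtracting $\tau(\mathcal{C})$, the Milnor contributions collapse coefficient by coefficient (via $2-1,\ 6-4,\ 12-9,\ 4-3,\ 6-5,\ 8-7$) to give
\[
d_1 d_2 = (2k-1)^2 - \tau(\mathcal{C}) = n_2 + 2n_3 + 3n_4 + t_3 + t_5 + t_7 + 1,
\]
which is exactly \eqref{con}. I expect the only genuine obstacle to be the correct determination of the local intersection multiplicities for the $A$-type points; once these contact orders are pinned down, the remainder is a direct transcription of the conic-line computation in Proposition \ref{ccc}.
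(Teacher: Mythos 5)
Your proof is correct and takes essentially the same route as the paper's own (sketched) argument: du Plessis--Wall giving $\tau(\mathcal{C}) = (2k-1)^{2} - d_{1}d_{2}$, the quasi-homogeneity identification $\tau_{p} = \mu_{p}$ yielding $\tau(\mathcal{C}) = n_{2} + 4n_{3} + 9n_{4} + 3t_{3} + 5t_{5} + 7t_{7}$, and the B\'ezout count $4\binom{k}{2} = n_{2} + 3n_{3} + 6n_{4} + 2t_{3} + 3t_{5} + 4t_{7}$. Your only addition is to justify explicitly the local contact orders $m = 2, 3, 4$ of the two smooth branches at the $A_{3}, A_{5}, A_{7}$ points, a step the paper asserts without comment.
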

\begin{proof}
Our proof here goes along the same lines as of Theorem \ref{main}, so let us present only a sketch of the proof.
The freeness of $\mathcal{C}$ implies that
$$\tau(\mathcal{C}) = (2k-1)^{2} - d_{1}d_{2}.$$
Now we need to recall that in this setting one has
$$\tau(C) = n_{2} + 4n_{3} + 9n_{4} + 3t_{3} + 5t_{5} + 7t_{7}.$$
Moreover, the following combinatorial count holds:
$$4 \binom{k}{2} = n_{2} +3n_{3} + 6n_{4} + 2t_{3} + 3t_{5} + 4t_{7}.$$
Let us rewrite this count in a slightly different form, namely
$$(2k-1)^2 - 1 - n_{2} - 2n_{3} - 3n_{4} -t_{3} - t_{5} - t_{7} = \tau(\mathcal{C}),$$
and now, after combining the data collected above, we get the desired identity.
\end{proof}
Let us now use Theorem \ref{conics} in practice.
\begin{example}
It is well-know that there exists an arrangement $\mathcal{C}$ of $k=4$ smooth conics admitting exactly twelve singularities of type $A_{3}$ given by the following defining polynomial
$$Q(x,y,z) = (xy-z^{2})(xy+z^{2})(x^{2}+y^{2}-2z^{2})(x^{2}+y^{2}+2z^{2}),$$
see for instance \cite[Example 4.4]{Malara}. We can compute its Poincar\'e polynomial, namely
$$\mathfrak{P}(\mathcal{C};t) = 1+7t + 13t^2,$$
and it does not split over the rationals. In fact, we know that $\mathcal{C}$ is not free, but only \textit{nearly-free} with exponents $(4,4,4)$.
\end{example}
\begin{example}
This construction is presented in \cite[Remark 2.5]{PP2023}. There exists a unique up to projective equivalence arrangement $\mathcal{C}$ of $k=3$ conics having exactly one ordinary triple point and three singularities of type $A_{5}$. We know that $\mathcal{C}$ is free. One can compute the combinatorial Poincar\'e polynomial of $\mathcal{C}$, namely
$$\mathfrak{P}(\mathcal{C};t) = 1 + 5t + 6t^2 = (1+2t)(1+3t),$$
hence the splitting over the rationals holds.
\end{example}
\begin{remark}
One may ask if we can define the Poincar\'e polynomial for any reduced plane curve $C \subset \mathbb{P}^{2}_{\mathbb{C}}$ without any assumption about the singularities, and the answer is yes, but the resulting polynomial, in all its generality, cannot be of combinatorial nature, as we will see right now. More precisely, for a reduced plane curve $C \subset \mathbb{P}^{2}_{\mathbb{C}}$ of degree $d$ we define its Poincar\'e polynomial as
\begin{equation}
\mathfrak{P}(C;t) =1 + (d-1)t + ((d-1)^{2} - \tau(C))t^{2}.    
\end{equation}
One can show that $\mathfrak{P}(C;t)$ splits over the rationals provided that $C$ is free, and for more details regarding this subject we refer to \cite{DIMCC}. 
\end{remark}

Let us now focus on the freeness of $d$-arrangements and present our proof of Theorem \ref{dd}.
\begin{proof}
Keeping the same notation as in our proof of Theorem \ref{main}, the freeness of $\mathcal{C}$ implies that
$$\tau(\mathcal{C}) = f_{2}-2f_{1}+f_{0} = (dk-1)^{2} + d_{1}^{2} -d_{1}(dk-1),$$
where $d_{1} = {\rm mdr}(\mathcal{C})$.
Using the combinatorial count
$$d^{2}(k^{2}-k) = f_{2} - f_{1}$$
we get
$$d^{2}(k^{2}-k) - f_{1} + f_{0} = \tau(\mathcal{C}),$$
and hence
\begin{equation}
\label{dis}
d_{1}^{2} - d_{1}(dk-1) +d^{2}k - 2dk+1  + f_{1} - f_{0} = 0.
\end{equation}
Now we compute the discriminant $\triangle_{d_{1}}$ of \eqref{dis}, namely
$$\triangle_{d_{1}} = (dk-1)^2 - 4(d^{2}k-2dk+1 + f_{1}-f_{0}),$$
which by the freeness property it satisfies $\triangle_{d_{1}} \geq 0$.
This gives us
$$\triangle_{d_{1}} = d^{2}k^{2}-2dk + 1 - 4d^{2}k + 8dk - 4 -4f_{1}+4f_{0} = f_{2} - f_{1} -3d^{2}k+6dk -3 -4f_{1}+4f_{0} \geq 0.$$
After rearranging we get
$$f_{2} -5f_{1} + 4f_{0} = \sum_{r\geq 2}(r^{2}-5r + 4)n_{r} \geq 3 + 3dk(d-2),$$
which completes the proof.
\end{proof}
\begin{remark}
In the case of a $2$-arrangement $\mathcal{C}$ with quasi-ordinary singularities (i.e., conic arrangements with ordinary quasi-homogeneous singularities), the above result tells us that if $\mathcal{C}$ is free, then
$$\sum_{r\geq 5} (r^{2}-5r+4)n_{r} \geq 3 + 2n_{2} + 2n_{3}.$$
\end{remark}

\section*{Acknowledgments}
I would like to thank Alex Dimca for all useful remarks regarding the content of the paper and for suggesting Remark \ref{euler}. I would also like to thank an anonymous referee for useful suggestions.

Piotr Pokora is supported by the National Science Centre (Poland) Sonata Bis Grant  \textbf{2023/50/E/ST1/00025}. For the purpose of Open Access, the author has applied a CC-BY public copyright license to any Author Accepted Manuscript (AAM) version arising from this submission.

\vskip 0.5 cm
\bigskip
Piotr Pokora
Department of Mathematics,
University of the National Education Commission Krakow,
Podchor\c a\.zych 2,
PL-30-084 Krak\'ow, Poland. \\
\nopagebreak
\textit{E-mail address:} \texttt{piotr.pokora@uken.krakow.pl}
\end{document}